\theoremstyle{plain}
\newtheorem{Thm}{Theorem}[section]
\newtheorem{Prop}[Thm]{Proposition}
\theoremstyle{definition}
\newtheorem{Def*}{Definition}
\DeclareSymbolFont{extraup}{U}{zavm}{m}{n}
\DeclareMathSymbol{\varheart}{\mathalpha}{extraup}{86}
\DeclareMathSymbol{\vardiamond}{\mathalpha}{extraup}{87}
\begin{document}

\title{Endgames in bidding chess}

\author{Urban Larsson\footnote{Urban Larsson, The Faculty of Industrial Engineering and Management,
Technion--Israel Institute of Technology,  Haifa 3200003, Israel, \texttt{urban031@gmail.com}} and Johan W\"astlund\footnote{Department of Mathematical Sciences, Chalmers University of Technology, SE-412 96 Gothenburg, Sweden, \texttt{wastlund@chalmers.se}}
}

\date{\small \today}  
\maketitle

\begin{abstract}
Bidding chess is a chess variant where instead of alternating play, players bid for the opportunity to move. Generalizing a known result on so-called Richman games, we show that for a natural class of games including bidding chess, each position can be assigned rational upper and lower values corresponding to the limit proportion of money that Black (say) needs in order to force a win and to avoid losing, respectively.  

We have computed these values for all three-piece endgames, and in all cases, the upper and lower values coincide. Already with three pieces, the game is quite complex, and the values have denominators of up to 138 digits. \end{abstract}


\section{Bidding Chess}
In chess, positions with only three pieces (the two kings and one more piece) are perfectly understood. The only such endgame requiring some finesse is king and pawn versus king, but even that endgame is played flawlessly by amateur players. 
In this article we investigate a chess variant where already positions with three pieces exhibit a complexity far beyond what can be embraced by a human.
 
\emph{Bidding Chess} is a chess variant where instead of the two players alternating turns, the move order is determined by a bidding process. Each player has a stack of chips and at every turn, the players bid for the right to make the next move. The highest bidding player then pays what they bid to the opponent, and makes a move. The goal is to capture the opponent's king, and therefore there are no concepts of check, checkmate or stalemate. 

As the total number of chips tends to infinity, there is in each position a limit proportion of chips that a player needs in order to force a win. We have computed these limits for all positions with three pieces, and the results (see for example Figure~\ref{F:complex}) show that already with such limited material, the game displays a remarkable intricacy. 

Similar \emph{bidding games} were introduced by David Richman in the 1980's, and presented in \cite{LLPU, LLPSU} only after his tragic death. Bidding chess has been discussed in \cite{Beasley, BP, DP}.

There are various reasonable protocols for making bids and handling situations of equal bids \cite{DP}. The bids can be \emph{secret}, meaning they are written down on slips of paper and then simultaneously revealed, or \emph{open}, where one player makes a bid and the other chooses between accepting (taking the money) or rejecting (paying the same amount and making a move). The open scheme is in theory equivalent to giving the choosing player a tiebreak advantage. 

For actual play we suggest an open scheme where the player who made the last move bids for the next. In the initial position, Black is considered to have made the last move (since it is normally White's turn), and starts the game by bidding for the first move. 

The bidding player can also make a \emph{negative} bid, meaning that the player who makes the next move will get paid for their trouble. As we shall see in Section~\ref{S:zugzwang}, there actually are \emph{zugzwang} positions calling for such bids. A player currently holding $n$ chips can therefore bid any integer in the range $-n,\dots, n$. A player who has more chips than their opponent can ensure the right to make the next move by bidding more than their opponent's bankroll, or force their opponent to move by similarly making a large negative bid.  

If the game is played with a small number of chips, the tiebreak scheme and the discreteness of the bidding options can affect the strategy \cite{DP}. However, as the number of chips grows, the game approaches a ``limit'' corresponding to a continuous model where one can bid any real amount. With continuous bidding we may assume that the total amount of money is 1. It turns out then that even a consistent tiebreak advantage is worth less than any positive amount of money. 


In the following, we therefore assume that the game is played with continuous money. We will ignore the bidding scheme and the tiebreak rules, since these will affect the outcome under optimal play only when the players' bankrolls are exactly at certain thresholds. Our discussion will focus on analyzing and computing these thresholds.


Figure~\ref{F:rookmate} shows a position that would be checkmate in ordinary chess.
\begin{figure} [h]
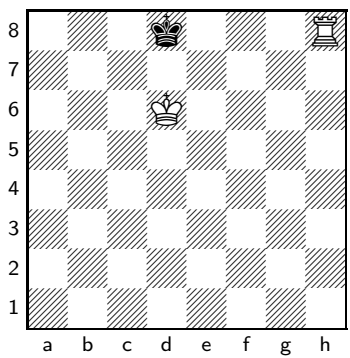

\storechessboardstyle{8x8}{maxfield=h8, smallboard, marginbottomwidth=0.5em}
\begin{center}
\chessboard[style=8x8, setwhite={Rh8,Kd6},addblack={Kd8},showmover=false]
\end{center}
\caption{A position with value $3/4$.}
\label{F:rookmate}
\end{figure}
White's rook is threatening the black king, and White will therefore go \emph{all in} (offering all their chips) before the next move. In order for Black to survive, they must win this bid, thereby doubling White's bankroll. Black's best option is now to move their king to one of the squares c7, d7 or e7, adjacent to the white king. At this point, whoever has more money will win the next bid and capture their opponent's king. 

The conclusion is that if White's bankroll is larger than $1/4$, they will be able to make one of the next two moves and win, while if it is smaller than $1/4$, Black is able to make two consecutive moves and capture the white king. At the threshold where White has exactly $1/4$ of the money, the outcome depends on the tiebreak scheme, but it still makes sense to say that this position has value $3/4$ for White. 


\section{Random turn games}
One of David Richman's insights was that there is a certain equivalence of bidding games to \emph{random turn} games \cite{LLPU, LLPSU, PR, PSSW07}. In a random turn game, the move order is determined by flipping a coin (just before each move, so you have to make your move before you know who plays next). The position in  Figure~\ref{F:rookmate} has value $3/4$ for White also in random turn chess. If White wins the next coin flip, the game is over, while if Black wins it, they will play Kd7 and the next coin flip decides the game.

The equivalence between bidding and random turn games can be understood inductively. If for the moment we disregard the possibility of draws (to which we shall return), we can write $\alpha(P)$ for the probability that White wins the random turn game from a given position $P$. Let $\alpha(P_w)$ be the probability of White winning from the position $P_w$ obtained after White's best move (that is, conditioning on White winning the next coin flip), and $\alpha(P_b)$ the probability of White winning after Black's best (from their perspective) move. Then provided the coin is fair, \begin{equation} \label{rec} \alpha(P) = \frac{\alpha(P_w) + \alpha(P_b)}2.\end{equation}

There is a bit of circularity in this argument, since the winning probabilities are what defines the ``best'' moves, so to make the argument rigorous we should consider the probability of white winning in at most $n$ moves, and then take the large $n$ limit (we will return to this issue). But the point is that equation \eqref{rec} has an interpretation also for the bidding game. We can think of the values $\alpha(P)$, $\alpha(P_w)$ and $\alpha(P_b)$ as the amounts of money that White can afford Black to have and still win the game. If we know how much money we will need after the next move, both if White and if Black makes that move, then the amount we need in the current position is the average of those two numbers, since we can then bid half their difference and win whether the opponent accepts or rejects. So \eqref{rec} holds also with that interpretation. By induction, it follows that the amount of money we can allow our opponent to have and still win is the same as our probability of winning the random turn game.

\section{Outline}
The main results of this study are as follows. Every partizan combinatorial game on a finite number of positions has rational upper and lower values (see Section~\ref{S:rational}). These values represent, under random turn play, the maximum probabilities that White can obtain of not losing, and of winning, respectively. Under bidding play, the same values represent the amount of money that Black needs in order to force a win and to avoid losing respectively. 

For all three-piece chess positions, the upper and lower values coincide. This is proved via computer calculation, see Section~\ref{S:verifying}, and we have no theoretical explanation for why this had to be the case. The calculation revealed that the game is extraordinarily complex. In particular there is a positions with king and knight versus king whose (common upper and lower) value has a 138-digit denominator. We also discuss some other positions of special interest, for example the existence of chess positions (with more than three pieces) with distinct upper and lower values (so-called \emph{nontrivial Richman intervals}), and of positions of \emph{zugzwang}, that is, positions requiring negative bids.


\section{Examples} \label{S:examples}
In some cases, values of positions in bidding chess can be conveniently calculated by instead analyzing random turn chess. Consider for instance a position with two bare kings: 

\begin{figure} [h]
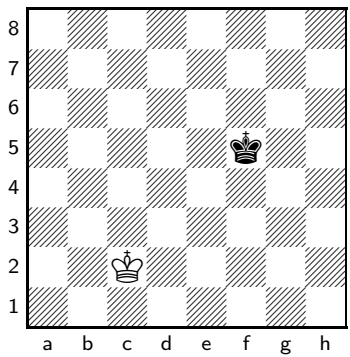

\storechessboardstyle{8x8}{maxfield=h8, smallboard, marginbottomwidth=0.5em}
\begin{center}
\chessboard[style=8x8, setwhite={Kc2},addblack={Kf5},showmover=false]
\end{center}
\caption{A position with value $1/2$.}
\label{F:barekings}
\end{figure}

In ordinary chess this position is a draw since no king can move to a square adjacent to the other. And if none of the players is willing to take a risk, the random turn game too will be drawn. But a player can guarantee a winning probability of $1/2$ even if the other player is satisfied with a draw. This is a simple consequence of the laws of probability: At some point you will get a run of six consecutive moves. Therefore if you consistently move towards your opponent's king every time you get to move, you will at some point be able to get the kings next to each other, giving you a $50\%$ chance of winning on the following turn. 

By Richman's equivalence argument it follows that in bidding chess with continuous money, an advantage in bankroll no matter how small will allow you to win the game with two bare kings! 
Figuring out how to actually win with a bankroll of $1/2+\epsilon$ is a nice little exercise (the number of moves needed will go to infinity as $\epsilon \to 0$). 

Actually that winning strategy can be carried out just as well even if your king is restricted to squares of only one color, say the dark squares. This means that if White has a light-squared bishop (bishop that moves on the light squares) and there are no other pieces except the kings, then as soon as the black king gets to a dark square, the bishop loses its value. It becomes a ghost that can neither attack the black king, nor defend the white one. 

If we play random turn chess from the position in Figure~\ref{F:bishop} (left), then in case White wins the first three turns, they can win by playing Bg2--h3--d7xa4 (or any of a number of other ways to capture the black king in three moves). And this is actually the only use White can have of their bishop. If Black wins any of the first three coin flips, they will move their king to a dark square and the chances will be even. White's winning chances are therefore $1/8$ more than Black's, which means that the value is $9/16$. 

\begin{figure} [h]
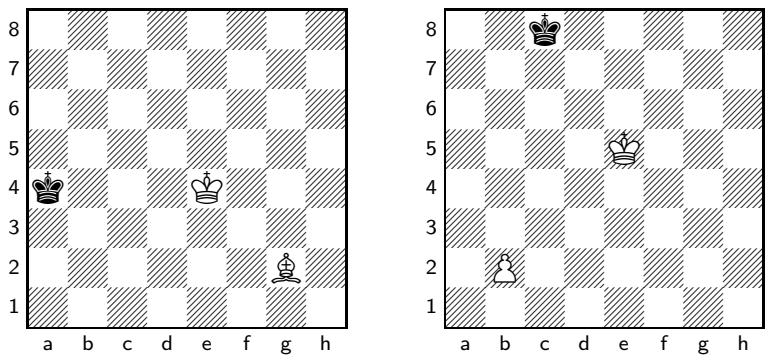

\begin{center}
\storechessboardstyle{8x8}{maxfield=h8, smallboard=<20>, marginbottomwidth=0.5em}
\chessboard[style=8x8, setwhite={Bg2,Ke4},addblack={Ka4},showmover=false]
\storechessboardstyle{8x8}{maxfield=h8, smallboard, marginbottomwidth=0.5em}
\chessboard[style=8x8, setwhite={Pb2,Ke5},addblack={Kc8},showmover=false]
\end{center}
\caption{Left: A bishop endgame worth $9/16$. As soon as Black gets to move, the bishop becomes worthless. Right: A pawn endgame worth  $33/64$. Whenever Black moves their king to the b-file, the pawn is neutralized.}
\label{F:bishop}
\end{figure} 

A similar analysis shows that the position in Figure~\ref{F:bishop} (right) has value $33/64$. As soon as Black gets to move their king to the b-file, the white pawn will be neutralized, since Black then moves down the b-file and captures the pawn unless White chooses to put the kings next to each other before that. Therefore the only use White can have of their pawn comes from the possibility of capturing the black king in five consecutive moves through b2--b4--b5--b6--b7xc8.


Other positions can be evaluated by slightly more sophisticated probabilistic arguments.
\begin{figure} [h]
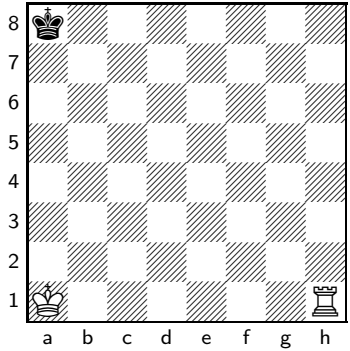

\storechessboardstyle{8x8}{maxfield=h8, smallboard, marginbottomwidth=0.5em}
\begin{center}
\chessboard[style=8x8, setwhite={Rh1,Ka1},addblack={Ka8},showmover=false]
\end{center}
\caption{A rook ending worth $1 -\frac{3^6}{2\cdot 4^6} = \frac{7463}{8192}$.}
\label{F:rookending}
\end{figure} 
In the position in Figure~\ref{F:rookending}, White's best option is to attack the black king from the side with Rh8. Black on the other hand will play Ka7 (or Kb7), trying to get their king as quickly as possible in close combat with the white king. So the black king will move down the a-file going straight for the white king. Meanwhile, White will attack the black king from the side with the rook whenever they can. In order for Black's plan to work, Black therefore has to succeed in playing six moves (from a8 to a2) without White getting two consecutive turns (in which case the rook would capture the black king), and then winning the final coin flip when the kings are face to face. The probability of Black winning at least one of two coin flips is $3/4$, and therefore the probability of Black's king getting to a2 without being captured by the rook is $(3/4)^6$. Black's winning chances are therefore $(1/2)\cdot (3/4)^6 = 729/8192$. Making the analysis rigorous would require dismissing other moves as inferior, but that is relatively straightforward.  

\section{Finite $n$ thresholds and their limits} \label{S:thresholds} 

We have written a computer program (the code, in the \emph{Processing} language, is available on request) that has calculated the values of all positions with three pieces on the board. The program starts by calculating certain thresholds that we now describe.

For each $n$ and each position $P$, we can define a threshold $\alpha_n(P)\in [0,1]$ such that if Black's bankroll is strictly smaller than $\alpha_n(P)$, White can force a capture of the black king in at most $n$ moves, while if Black has strictly more money than $\alpha_n(P)$, the black king can survive for at least $n$ more moves. At the exact threshold, the outcome might depend on the tiebreak rule.

These numbers satisfy the recursion
\begin{equation}\label{computational} \alpha_{n+1}(P) =\frac{\max_w \alpha_n(P_w) + \min_b \alpha_n(P_b)}2,\end{equation}
where $w$ and $b$ range over the white and black move options from position $P$, and $P_w$ and $P_b$ are the positions reached from $P$ by these moves. The ``boundary conditions'' are given by setting $\alpha_n(P) = 0$ or 1 respectively (for all $n$) in positions where the white or black king has already been captured, and $\alpha_0(P)=0$ otherwise. 

Similarly we define thresholds $\beta_n(P)$ as the amount of money that Black needs in order to force a capture of the White king in at most $n$ moves. The $\beta$-thresholds satisfy the same recursive equation \eqref{computational} as $\alpha$, but start from setting $\beta_0(P) = 1$ in positions where both kings remain on the board. Notice that both $\alpha$ and $\beta$ measure the quality of a position from White's perspective in the sense that a higher value is better for White. 

It follows by induction that all these values are dyadic rational numbers, that is, rational numbers with a power of 2 in the denominator. For each position $P$, the sequence $\alpha_n(P)$ is nondecreasing and bounded above by 1. Therefore there is a limit $\alpha(P)$ which is the smallest number such that if Black's bankroll is below $\alpha(P)$, White can force a win. These limits satisfy the same equations:
\begin{equation}\label{a} \alpha(P) =\frac{\max_w \alpha(P_w) + \min_b \alpha(P_b)}2.\end{equation}
Similarly there is a limit $\beta(P)$ of $\beta_n(P)$ which is the amount of money that Black needs in order to force a capture of the white king. It is clear from the definitions that 
\begin{multline} 0\leq \alpha_0(P)\leq \alpha_1(P)\leq \alpha_2(P)\leq  \dots \\ \leq \alpha(P)\leq \beta(P) \leq \\ \dots\leq \beta_2(P) \leq \beta_1(P)\leq \beta_0(P) \leq 1.\end{multline}

In the examples we have discussed, the values $\alpha$ and $\beta$ have been equal. 
But it may also happen (see Section \ref{S:sharp}) that $\alpha(P) < \beta(P)$, so that if Black's bankroll is in the interval between $\alpha(P)$ and $\beta(P)$, the game is drawn in the sense that none of the players can force the capture of the opponent's king. 

For impartial games, such positions (with so-called \emph{nontrivial Richman intervals}) can occur only in games with infinitely many positions, and an example is demonstrated in \cite[Figure 10]{LLPSU}.

The numbers $\alpha$ and $\beta$ can also be interpreted as the probabilities that White can win, and avoid losing, respectively, in the random turn game. Notice that the strategy that achieves the maximal probability $\alpha$ of winning may be different from the one that achieves the maximal probability $\beta$ of not losing.

Our computer program starts by calculating the numbers $\alpha_n(P)$ and $\beta_n(P)$ for all positions with three pieces, and $n$ up to several thousands. This requires working with ``big integers'' since the values are rational numbers with $n$-bit numerators (and denominator $2^n$), but $\alpha_{1000}(P)$ and $\beta_{1000}(P)$ for instance can be computed in a few minutes without any particular optimization. 

\section{Rationality of the limits $\alpha(P)$ and $\beta(P)$}\label{S:rational}

In the examples of Section~\ref{S:examples}, all values were dyadic rational numbers, but this need not always be the case. As we shall see in Section~\ref{S:nondyadic}, there are positions whose values have non-2-power denominators. However, $\alpha(P)$ and $\beta(P)$ are always rational numbers. This holds in general for games with finitely many positions. Suppose therefore that Black and White play a bidding (or random turn) game defined by a finite set of positions, where each position has prescribed sets of white options and black options (other positions to which White and Black can move respectively). Some positions are designated as winning for one of the players. 

Suppose also that $\alpha_n(P)$, $\beta_n(P)$ and their limits $\alpha(P)$ and $\beta(P)$ are defined as in Section~\ref{S:thresholds}.

\begin{Prop}
For every position $P$, $\alpha(P)$ and $\beta(P)$ are rational numbers. 
\end{Prop}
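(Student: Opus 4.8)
The plan is to reduce the computation of $\alpha(P)$ to solving a single linear system over $\mathbb{Q}$, by fixing optimal positional strategies and then exploiting the random-turn interpretation. First I would record that the limit function $\alpha$ satisfies the fixed-point system \eqref{a}. Since each position has only finitely many options, I can choose for every position $P$ a white option $w^*(P)$ attaining $\max_w\alpha(P_w)$ and a black option $b^*(P)$ attaining $\min_b\alpha(P_b)$. These choices are positional (memoryless) strategies, and by construction $\alpha(P)=\tfrac12\alpha(w^*(P))+\tfrac12\alpha(b^*(P))$ at every non-terminal $P$.

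Next I would consider the random-turn game in which White always plays $w^*$ and Black always plays $b^*$. This reduces the dynamics to a finite Markov chain: from a non-terminal position $P$ the chain moves to $w^*(P)$ or to $b^*(P)$, each with probability $1/2$, while the positions where a king has been captured are absorbing. Writing $h(P)$ for the probability that this chain is eventually absorbed in a position where White has won, a routine computation shows $h$ is rational: on the set $S$ of non-absorbing positions from which a white win is reachable, $h$ solves $h=Qh+r$, where $Q$ is the substochastic matrix of within-$S$ transitions (all entries $0$ or $1/2$) and $r$ is the vector of one-step probabilities into the white-winning set. Since a white win is reachable from every state of $S$, the matrix $I-Q$ is invertible and $h=(I-Q)^{-1}r\in\mathbb{Q}^{S}$, while $h=0$ on the remaining non-winning positions.

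The heart of the argument is to identify $\alpha(P)$ with the rational number $h(P)$, and I would do this with two martingale estimates for the process $\alpha(X_t)$ along the play. If Black fixes $b^*$ while White plays arbitrarily, then at each step the conditional expectation of the next value is $\tfrac12\alpha(P_w)+\tfrac12\alpha(b^*(P))\le\tfrac12\max_{w'}\alpha(P_{w'})+\tfrac12\min_b\alpha(P_b)=\alpha(P)$ by \eqref{a}, so $\alpha(X_t)$ is a bounded supermartingale; since $\alpha\equiv1$ on the (absorbing) white-winning positions and $\alpha\ge0$ elsewhere, $P(\text{White has won by time }t)\le E[\alpha(X_t)]\le\alpha(P)$, and letting $t\to\infty$ gives $h(P)\le\alpha(P)$. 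Under the pair $(w^*,b^*)$ the same computation is an equality, so $\alpha(X_t)$ is a bounded martingale and $\alpha(P)=E[\alpha(X_\infty)]=h(P)+\sum_i c_i\,d_i(P)$, where the sum runs over the recurrent classes $C_i$ of the chain that meet neither winning set, $c_i$ is the (necessarily constant) value of $\alpha$ on $C_i$, and $d_i(P)$ is the absorption probability into $C_i$.

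The main obstacle is therefore to show that these ``draw classes'' contribute nothing, i.e. that $c_i=0$ whenever $d_i(P)>0$; this is exactly the assertion that the greedy strategy $w^*$ achieves the value $\alpha$ rather than being trapped forever in a non-winning cycle. Here I would use that $\alpha=\lim_n\alpha_n$ with $\alpha_0\equiv0$, so that by Kleene iteration from below $\alpha$ is the \emph{least} fixed point of the monotone averaging operator defined by the right-hand side of \eqref{computational}; the standard optimality of greedy strategies for reachability then forces $\alpha$ to vanish on every recurrent class disjoint from the winning set, giving $\alpha(P)=h(P)\in\mathbb{Q}$. I expect verifying this least-fixed-point/greedy-optimality step to be the delicate part, precisely because the game can have genuine draws (the phenomenon of nontrivial Richman intervals, $\alpha<\beta$). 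Finally, $\beta(P)$ follows by the identical argument applied to the game with the two players and the two winning sets interchanged: if $\gamma$ denotes the value for Black of reaching a black win, then $\gamma_n\uparrow\gamma$ from $\gamma_0\equiv0$, one checks that $\beta_n=1-\gamma_n$, hence $\beta=1-\gamma$, and $\gamma$ is rational by the argument above, so $\beta(P)$ is rational as well.
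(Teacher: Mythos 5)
Your route is genuinely different from the paper's: the paper never constructs strategies or Markov chains at all. It observes that $\alpha$ and $\beta$ are the pointwise least and greatest solutions of \eqref{theoretical} (Richman functions), replaces that nonlinear system by the finitely many linear systems \eqref{linear} obtained by fixing distinguished options $P_W,P_B$, and extracts rationality from the fact that each solvable system defines a polytope with rational vertices. Your plan, by contrast, must actually exhibit a strategy attaining the value, and that is where it breaks. The step you flagged as delicate is not merely delicate, it is false as stated: for an \emph{arbitrary} $\alpha$-greedy pair $(w^*,b^*)$, recurrent classes avoiding the winning sets need not carry $\alpha\equiv 0$, and the least-fixed-point property of $\alpha$ does not rescue this. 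Counterexample: take positions $P_1,P_2$ and a terminal White win $W$; from $P_1$ White can move to $P_2$ or to $W$ and Black can move only to $P_2$; from $P_2$ both players can move only to $P_1$. The unique (hence least) Richman function has $x(P_1)=x(P_2)=1$, and indeed $\alpha_n\uparrow 1$. But since $\alpha(P_2)=\alpha(W)=1$, the choice $w^*(P_1)=P_2$ is greedy, and under this pair the chain cycles $P_1\to P_2\to P_1\to\cdots$ forever, so $h(P_1)=0$ while the draw class $\{P_1,P_2\}$ has $\alpha\equiv 1$ and is reached with probability $1$. Greediness with respect to the value function is simply not sufficient for reachability objectives; one needs a tie-breaking rule that forces progress toward termination.

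This is exactly the phenomenon the paper confronts in Section~\ref{S:verifying}: a Richman function only certifies the \emph{upper} bound on White's winning probability, and to get the matching lower bound the paper builds the sets $T_n$ and has White choose, among greedy moves, one leading to a position with minimal label, which is what guarantees transience. So your argument can be completed, but the missing piece is a substantive theorem --- the existence of a greedy selection under which every position with $\alpha(P)>0$ is transient (equivalently, pure memoryless optimal strategies for reachability in this class of games, or the first proposition of Section~\ref{S:verifying} specialized to $x=\alpha$) --- and not a standard fact that holds for an arbitrary greedy choice. Once that is supplied, your martingale decomposition does give $\alpha=h\in\mathbb{Q}$, and your reduction $\beta=1-\gamma$ via the colour-swapped game is fine. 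Note what the paper's polytope argument buys: rationality of both $\alpha$ and $\beta$ without ever producing an optimal strategy, which is why it goes through in a few lines while the strategic route requires this additional machinery.
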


\begin{proof}
Consider the system of equations 

\begin{equation}\label{theoretical} 
x(P) =\frac{\max_w x(P_w) + \min_b x(P_b)}2,
\end{equation}
with the extra constraints that $0\leq x(P)\leq 1$, and that $x(P)=0$ or 1 respectively for positions defined as winning for one of the players (when the white or black king is already captured). In \eqref{theoretical} we have only replaced the symbol $\alpha$ in \eqref{a} by $x$ to indicate that these are variables in a system of equations. A solution to \eqref{theoretical} (including boundary conditions) will be called a \emph{Richman function}, following \cite{LLPU, LLPSU}. 

It follows by induction on $n$ that $\alpha_n$ is a lower bound on any Richman function and similarly $\beta_n$ is an upper bound. Therefore among all Richman functions, $\alpha$ simultaneously minimizes all values, and $\beta$ simultaneously maximizes them.

Since the right hand side of \eqref{theoretical} involves both a minimization and a maximization, the system is inherently nonlinear. But suppose that for each position $P$ we choose (arbitrarily) move options $W$ and $B$ to positions $P_W$ and $P_B$ respectively. Then in order to check whether there is a Richman function for which $x(P_W) =  \max_w x(P_w)$ and $x(P_B) = \min_b x(P_b)$ for every $P$, we can replace \eqref{theoretical} by a linear system of inequalities: For each position $P$ and any white and black options $P_w$ and $P_b$ respectively, we impose the constraints

\begin{equation}
  \left\{ \,
    \begin{IEEEeqnarraybox}[][c]{l?s}
      \IEEEstrut
      x(P) \geq \frac{x(P_w) + x(P_B)}2, \\
      x(P) \leq \frac{x(P_W) + x(P_b)}2, 
      \IEEEstrut
    \end{IEEEeqnarraybox}
\right.
  \label{linear}
\end{equation}
again together with the boundary conditions that $x(P)=1$ if White has won and $x(P)=0$ if Black has won.

Notice that despite the apparent slack in \eqref{linear}, every solution to the system \eqref{linear} is also a solution to the system \eqref{theoretical}: Assuming that \eqref{linear} holds, we have
\begin{multline} \notag
x(P) \leq \frac{x(P_W) + \min_b x(P_b)}2 \leq \frac{\max_w x(P_w) + \min_b x(P_b)}2 \\ \leq \frac{\max_w x(P_w) + x(P_B)}2 \leq x(P).
\end{multline}
Conversely, every Richman function will yield, by choosing $P_W$ and $P_B$ as a minimizing and maximizing option respectively, a solution to the system \eqref{linear}.

The system \eqref{linear} is a set of linear constraints, and whenever the set of solutions is nonempty, it is a polytope with vertices in rational points. Since there are only finitely many ways of choosing $P_W$ and $P_B$, and each resulting system (if solvable) has rational minimum and maximum values for $x(P)$, it follows that $\alpha(P)$ and $\beta(P)$ are rational for every position $P$.
\end{proof}

Similar arguments are given in \cite{LLPU, LLPSU}, but the situation considered in those papers is slightly simpler since for finite impartial games there is only one Richman function. The example in \cite[Figure~10]{LLPSU} shows that for a game with infinitely many positions, the minimum and maximum Richman functions are not necessarily rational. Although the authors of \cite{LLPSU} seem to have overlooked this, in their example, 
$r(k) = (\sqrt{5}-1)^k/2^{k+1}$.
 
\section{Guessing a rational limit}\label{S:guess}
It is obviously not feasible to solve all the linear systems of the form \eqref{linear} in order to find the value of a position. On the other hand our computer program will calculate the finite $n$ thresholds $\alpha_n(P)$ and $\beta_n(P)$ for all positions with two or three pieces and all $n\leq 1000$ (say) in just a few minutes, and this ought to give a good indication of what the limits $\alpha(P)$ and $\beta(P)$ are.
The computation reveals that for all three-piece positions, \[\beta_{1000}(P) - \alpha_{1000}(P) < 10^{-91}.\] This clearly suggests that $\alpha(P) = \beta(P)$ for all three-piece positions (and we describe in Section~\ref{S:verifying} how to verify this). If this is correct, then for sufficiently large $n$, the common value of $\alpha(P)$ and $\beta(P)$ will be the rational number with the smallest denominator in the  interval $[\alpha_n(P), \beta_n(P)]$. We do not know of any simple and useful estimates of how large this $n$ has to be, or of how large the denominators of $\alpha(P)$ and $\beta(P)$ can be (they can be fairly large, see Section~\ref{S:knight}). 

But an obvious thing to do is to let $s_n(P)$ be the simplest rational number (the one with smallest denominator) in the interval $[\alpha_n(P), \beta_n(P)]$, and check whether $s_n$ is a Richman function, in other words whether $x(P) = s_n(P)$ yields a solution to the system \eqref{theoretical}. The numbers $s_n(P)$ can be computed from $\alpha_n(P)$ and $\beta_n(P)$ using a standard technique based on comparing the continued fraction expansions of $\alpha_n(P)$ and $\beta_n(P)$.

We let our computer program calculate $s_n(P)$ for reasonably small $n$ using exact rational arithmetic, and then counted for each $n$ the number of equations in the system \eqref{theoretical} that were violated when putting $x(P) = s_n(P)$. It turns out that at $n=2644$, this number drops to zero, and all equations are satisfied. Actually $s_n$ stabilizes for the bishop endgame already at $n=30$, for the queen endgame at $n=156$, and for the rook endgame at $n=331$. The bulk of the computation is then devoted to the knight endgames and a smaller set of pawn endgames potentially leading to knight promotion. 

%

\section{Verifying that the guesses are correct} \label{S:verifying}
We have now found an explicit Richman function $x(P) = s_{2644}(P)$. This shows that $\alpha(P)\leq s_{2644}(P) \leq \beta(P)$ for every $P$. We will show that equality holds, but at this point it is still conceivable that some of these inequalities are strict. Notice that since $s_{2644}(P)$ will be in the interval $[\alpha_n(P), \beta_n(P)]$ for every $n$, we have $s_n(P) = s_{2644}(P)$ whenever $n\geq 2644$ and we may set $s(P) = s_{2644}(P) = \lim_{n\to\infty}s_n(P)$. 

Although the number of violated equations in \eqref{theoretical} does not consistently decrease as $n$ increases, once it drops to zero so that the system is satisfied, it must remain zero for all larger $n$.

Whenever $x$ is a Richman function, it provides a certificate that White cannot win random turn chess from a position $P$ with probability larger than $x(P)$, and that analogously Black cannot win with probability larger than $1-x(P)$. This is because it provides each player with what we might call an \emph{$x$-greedy} strategy: Each time it is your turn, you choose to move in such a way that you maximize $x$ if you are White, and minimize $x$ if you are Black. 

If White follows an $x$-greedy strategy from a position $P=P_0$, then no matter what strategy Black adopts, the expectation $\mathbf{E}\left[x(P_n)\right]$ of the value of $x$ at the position $P_n$ reached after $n$ moves will satisfy \begin{equation} \label{expineq} \mathbf{E}\left[x(P_n)\right] \geq x(P_0).\end{equation}
Here we use the convention that whenever one of the kings is captured, the resulting terminal position will remain on the board at all subsequent times. We consider the White and Black strategies to be fixed, and the expectation is over the results of the coin flips.

It follows from \eqref{expineq} that the probability of Black having won the game after $n$ moves cannot exceed $1-x(P)$ for any $n$. Similarly, if Black follows an $x$-greedy strategy, White cannot win with probability greater than $x(P)$.

To verify that $\alpha = \beta = x$, we would like to obtain a stronger certificate showing that White can actually win with probability $x(P)$, and that Black can win with probability $1-x(P)$. When the numbers $x(P)$ have been computed explicitly,  this can actually be effectively checked. 


We assume that we have computed a table of all positions (with up to three pieces) and the values of a Richman function $x$ (in our case $x = s_{2644}$). We describe how to verify that $\alpha(P) = x(P)$ for every $P$.

We wish to exhibit a strategy for White in the random turn game which is $x$-greedy and at the same time has the property that regardless of Black's strategy, the game will terminate with probability 1. If there exists such a strategy, then in view of \eqref{expineq}, if we play from a position $P$, White will win with probability at least $x(P)$. Since this is best possible, it follows that $\alpha = x$. 

To find such a strategy we define a sequence of sets of positions as follows: Let $T_0$ be the set of all terminal positions, and 
for $n\geq 0$, let $T_{n+1}$ be the set of positions that either belong to $T_n$, or have an $x$-greedy white move option to a position $T_n$, or have \emph{all} their black move options to positions in $T_n$. These sets are defined by a closure operation and we can therefore effectively compute the sequence of sets until they stabilize, by making a table where each position $P$ is labeled with the smallest $n$ for which $P\in T_n$, if there is such an $n$. 
When for some $n$, $T_{n+1} = T_n$, the process stabilizes and we let $T=T_n$.

The strategy for White now consists in always playing $x$-greedy moves, and whenever there is an $x$-greedy move option to $T$, choosing such a move to a position with minimal label, that is, belonging to $T_i$ for the smallest possible $i$. 

Following this strategy, White will guarantee that the positions in $T$ are \emph{transient} in the sense that they will only be visited a finite number of times. This is because whenever we reach a position in $T_i$ (for $i>0$), either White has \emph{at least one} move to $T_{i-1}$, or \emph{all} Black's moves lead to $T_{i-1}$, and in either case there is (at least) a $50\%$ chance that the next move will lead to a position in $T_{i-1}$. 

Consequently, each time we reach a position in $T_n$, the probability that the game will terminate in another $n$ moves is at least $2^{-n}$. Therefore with probability 1, the game cannot visit such a position infinitely many times. 

\begin{Prop}
We have $\alpha = x$ if and only if all positions $P$ with $x(P)>0$ belong to $T$.
\end{Prop}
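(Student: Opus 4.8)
The plan is to prove the two implications separately, reusing the strategy-based machinery already set up. Throughout, write $U$ for the complement of $T$ among all positions. Since $T_0\subseteq T$, every position in $U$ is non-terminal, and unwinding the closure that defines $T$ shows that each $P\in U$ has two properties: every $x$-greedy White move from $P$ stays in $U$ (otherwise $P$ would have entered $T$ through its White option), and $P$ has at least one Black move into $U$ (otherwise all Black moves would land in $T$ and again $P\in T$). These two facts are what drive both directions.

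For the \emph{if} direction I would assume $\{x>0\}\subseteq T$ and check that White's strategy described above, namely always $x$-greedy and, whenever an $x$-greedy move into $T$ exists, taking one of smallest label, wins with probability at least $x(P)$ from every $P$. The transience argument already shows that under this strategy the non-terminal positions of $T$ are visited only finitely often, so with probability $1$ the game either terminates or is eventually confined to $U$. Because $\{x>0\}\subseteq T$, confinement to $U$ forces $x(P_n)\to 0$, so the almost-sure limit $L$ of the bounded submartingale $x(P_n)$ equals $0$ on the non-terminating event and lies in $\{0,1\}$ on the terminating one. Hence $\mathbf{E}[L]$ is exactly White's winning probability, while \eqref{expineq} and bounded convergence give $\mathbf{E}[L]=\lim_n\mathbf{E}[x(P_n)]\geq x(P_0)$. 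Thus White wins with probability at least $x(P_0)$, so $\alpha(P_0)\geq x(P_0)$; combined with $\alpha\leq x$ (valid since $x$ is a Richman function) this yields $\alpha=x$.

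For the \emph{only if} direction I would prove the contrapositive: assuming some $P_0$ with $x(P_0)>0$ lies in $U$, I would exhibit a Black strategy against which White's best winning probability $f(P_0)$ satisfies $f(P_0)<x(P_0)$, so that $\alpha(P_0)\leq f(P_0)<x(P_0)$ and hence $\alpha\neq x$. The natural candidate is for Black to \emph{remain in $U$}, which is always possible by the second property; since all terminal positions lie in $T$, a game confined to $U$ never terminates, so White wins with probability $0$ unless White himself moves the game out of $U$. By the first property every such escape is a non-$x$-greedy move and so strictly lowers $x$, and over the finitely many positions there is a uniform gap $\delta>0$ by which each escape drops the potential. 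After any escape into $T$ the opponent reverts to the $x$-greedy play, which by the certificate bounds White's continuation probability by the value of $x$ at the escape square.

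The crux, and the step I expect to be the main obstacle, is turning these observations into the \emph{strict} bound $f(P_0)<x(P_0)$. The difficulty is a genuine tension: Black's ``remain in $U$'' moves need not be $x$-minimizing, so $x(P_n)$ is no longer a supermartingale and the clean expectation bookkeeping of the \emph{if} direction breaks; conversely, the purely $x$-minimizing Black strategy may feed the game into $T$ at low-value squares, after which White cashes in exactly the martingale value and attains $f(P_0)=x(P_0)$, as one can already see in small examples. The resolution I would pursue is to design an auxiliary potential that simultaneously encodes ``staying in $U$ is worthless to White'' and ``each escape costs at least $\delta$'', for instance a function equal to $x$ on $T$ and suitably depressed on $U$, and to show it is a supermartingale under the remain-in-$U$ strategy. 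Equivalently, one can recast the statement as showing that $x$ fails to be the pointwise-minimal Richman function, by explicitly lowering its values on $U\cap\{x>0\}$ and verifying that \eqref{theoretical} still holds; the boundary cases, where an $x$-greedy White move or a Black move crosses between $U$ and $T$, are where the verification is delicate. Either formulation reduces to the same principle: positions from which White cannot $x$-greedily force termination carry phantom value that Black can prevent White from ever realizing.
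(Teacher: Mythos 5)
Your \emph{if} direction is correct and follows the paper's own route (transience of $T$ plus \eqref{expineq}, together with $\alpha\leq x$ for any Richman function $x$); the martingale-convergence details you supply are a harmless elaboration. The genuine gap is in the \emph{only if} direction: you never prove it. You correctly identify the obstacle---Black's stay-in-$U$ moves need not be $x$-greedy, so $x(P_n)$ ceases to be a supermartingale and the value can be pumped upward inside $U$, while a purely $x$-greedy Black yields only the non-strict bound $\leq x(P_0)$---but you then only describe two programs you ``would pursue'' without carrying either out. Note also that your second program, lowering $x$ on $U\cap\{x>0\}$ and checking \eqref{theoretical}, cannot work as literally described: a non-greedy White escape from $U$ into $T$ at a square of positive value forces the equation at the escaped-from position to have a positive right-hand side, so the depressed function is in general not a Richman function. (For comparison, the paper's own converse is exactly the two-case dichotomy you set up and found wanting, stated informally; you have rediscovered, but not repaired, its weak point.)

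The missing idea is to run your containment argument not on all of $U$ but at the top of $U$. Let $M=\max_{P\in U}x(P)$, which is positive when some $P\in U$ has $x(P)>0$, and let $U_M\subseteq U$ be the set of positions attaining $M$. For $P\in U_M$ every $x$-greedy White option lies in $U$, so $\max_w x(P_w)\leq M$, and then \eqref{theoretical} forces $\min_b x(P_b)\geq M$; hence the Black option into $U$ guaranteed by $P\notin T$ has value exactly $M$, i.e.\ Black's containment move from $U_M$ is itself $x$-greedy and stays in $U_M$. Also, any White move from $U_M$ to a position of value $M$ would be $x$-greedy (since $\max_w x(P_w)\leq M$) and hence would stay in $U$, so every White move that leaves $U_M$ lands on a value at most $M-\gamma$, where $\gamma>0$ is the least gap between distinct values of $x$ (there are finitely many positions). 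Now let Black play $x$-greedily, breaking ties into $U_M$: this is globally greedy, so the supermartingale bookkeeping survives; the game can leave $U_M$ only by a White move, after which White's continuation winning probability is at most $x(P_\tau)\leq M-\gamma$; and a play that never leaves $U_M$ never terminates, since $U_M$ contains no terminal positions. Hence White wins from any $P\in U_M$ with probability at most $M-\gamma<x(P)$ against this single Black strategy, so $\alpha(P)<x(P)$ and $\alpha\neq x$, which is the contrapositive you wanted. (The same computation can be phrased non-probabilistically as an induction on $n$ showing $\alpha_n\leq M-\gamma$ on $U_M$ for every $n$.) This is your ``auxiliary potential'' idea, but the point is that it need only be implemented on $U_M$, where containment is free.
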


\begin{proof}
If all positions where $x$ is nonzero belong to $T$, then no such position can be visited infinitely many times. Consequently the game will either terminate or end up in an infinite sequence of positions where $x$ takes value zero. Since White plays $x$-greedily, if the game starts from a position $P$, in view of \eqref{expineq}, White must win with probability at least $x(P)$, showing that $\alpha(P) = x(P)$. 

Conversely, if there is a position $P$ with $x(P)>0$ which is not in $T$, then every White strategy pretending to win with probability given by $x$ is flawed in one of two ways: Either it consequently plays $x$-greedy moves, in which case it can't win starting from $P$ (since Black can avoid moving into $T$). Or it does \emph{not} always play $x$-greedy moves, in case again it can't always win with probability given by $x$ (provided Black plays $x$-greedily).
\end{proof}

This shows that once we have verified that $s=s_{2644}$ is a Richman function, we can effectively check whether or not $\alpha = s$, and similarly whether or not $\beta = s$. It turns out that in the three-piece endgames, all positions belong to $T$, which shows that $\alpha = s$. Notice however that the definition of $T$ is not symmetrical with respect to the two players, so that in order to verify that $\beta = s$, we would need to check a set $T'$ defined similarly but from Black's perspective.  

In order to verify that all positions belong to $T$ (and similarly to $T'$), we actually only need to investigate a small set of positions. Let us say that a position $P$ is \emph{quiescent} (relative to $x$) if $$\min_b x(P_b) = \max_w x(P_w).$$

\begin{Prop}
If $x$ is a Richman function and all positions that are quiescent with respect to $x$ belong to $T$, then all positions belong to $T$. 
\end{Prop}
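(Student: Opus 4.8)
\section*{Proof proposal}

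The plan is to argue by contradiction, producing a quiescent position outside $T$. Let $S$ denote the (finite) set of positions that do \emph{not} lie in $T$, and suppose $S\neq\varnothing$. The first step is to unwind the closure defining $T$. Since $T$ is the stable limit of the sets $T_n$, a position $P$ fails to belong to $T$ exactly when: it is nonterminal; no $x$-greedy white move from $P$ (i.e.\ no white move attaining $\max_w x(P_w)$) leads into $T$ — for otherwise that move would target some $T_n$ and force $P\in T_{n+1}$; and not all black moves from $P$ lead into $T$ — for otherwise, taking $n$ past the labels of all its black successors, $P$ would enter $T_{n+1}$. Restated for $P\in S$: \emph{every} $x$-greedy white move from $P$ leads back into $S$ (the white condition), and \emph{at least one} black move from $P$ leads into $S$ (the black condition).

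The second step is an extremal argument. Choose $P^\ast\in S$ maximizing $x$ over the finite set $S$, and I claim $P^\ast$ is quiescent. By the white condition, the white successor realizing $\max_w x(P^\ast_w)$ lies in $S$, so maximality of $x(P^\ast)$ over $S$ gives $\max_w x(P^\ast_w)\le x(P^\ast)$. Since $x$ is a Richman function, $x(P^\ast)=\frac12\bigl(\max_w x(P^\ast_w)+\min_b x(P^\ast_b)\bigr)$, and substituting the previous inequality forces $\min_b x(P^\ast_b)\ge x(P^\ast)$. On the other hand, the black condition supplies a black successor in $S$, whose $x$-value is $\le x(P^\ast)$ by maximality, so $\min_b x(P^\ast_b)\le x(P^\ast)$. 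Hence both $\max_w x(P^\ast_w)$ and $\min_b x(P^\ast_b)$ equal $x(P^\ast)$; in particular $\min_b x(P^\ast_b)=\max_w x(P^\ast_w)$, so $P^\ast$ is quiescent. But $P^\ast\in S$, contradicting the hypothesis that every quiescent position belongs to $T$.

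The conceptual content sits in the first step: correctly reading off the \emph{asymmetric} pair of conditions from the one-sided closure (a single good white move versus all black moves). This asymmetry is exactly what makes the white and black inequalities above combine in the right direction, and it is the point I would check most carefully. The remaining extremal argument is then short. One should also note the standing assumption that every nonterminal position has at least one white and one black option (as for chess kings, and as is needed for $\max_w$ and $\min_b$ to be defined); this also ensures that any position with no black moves is swept into $T$ at the first stage, so such positions never interfere with the black condition.
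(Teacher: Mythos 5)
Your proof is correct, and it is essentially the paper's own argument in contrapositive form: the paper proceeds by induction on the number of positions with $x$-value exceeding $x(P)$, using the Richman equation to show that a non-quiescent position has either an $x$-greedy white option of strictly larger value or all black options of strictly larger value (hence, inductively, in $T$), while your maximal-counterexample position $P^\ast$ and the two inequalities forced by the Richman equation encode exactly the same dichotomy. Your explicit unwinding of the one-sided closure defining $T$ (every $x$-greedy white move from a position outside $T$ stays outside $T$; some black move stays outside $T$) is a correct and slightly more careful rendering of what the paper leaves implicit, so there is no gap.
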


\begin{proof}
This follows by induction on the number of positions that have $x$-values larger than $x(P)$ for a given position $P$. Suppose that all quiescent positions belong to $T$, and that also all positions $Q$ with $x(Q)>x(P)$ belong to $T$.

If $P$ is quiescent, then by assumption $P$ belongs to $T$. If $P$ is not quiescent, then either $$\min_b x(P_b) <x(P) < \max_w x(P_w),$$ or $$\max_w x(P_w) < x(P) < \min_b x(P_b).$$
In the former case there is a white option to a position $P_w$, which must belong to $T$ since $x(P_w)>x(P)$. In the latter case all Black options lead to positions $P_b$ which similarly must belong to $T$. In either case, $P$ must belong to $T$.
\end{proof}

After finding the Richman function $x=s_{2644}$, we let our computer program list all quiescent positions relative to this function. They turn out to fall in four categories, three of which were discussed in Section~\ref{S:examples}:

\begin{itemize}
\item Bare kings: Only the two kings on the board. 
\item Ghost bishop: The black king and white bishop on squares of opposite color.
\item Blocked pawn: The black king on the same file as a white pawn, and in front of it. 
\item Cornered king: There are eight positions similar to the one shown in Figure~\ref{F:corneredking}, where the white king is trapped in front of its own pawn near a corner and cannot get out without moving next to the black king. For the king to be trapped on the a-file, the white pawn has to be on a6 or a7. If it is on a6, the black king has to be on c7, and if it is on a7, the black king can be on c7 or c8. And there are four similar positions where the king is trapped on the h-file.  
\end{itemize}

\begin{figure} [h]
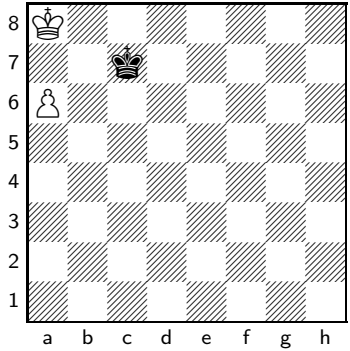

\storechessboardstyle{8x8}{maxfield=h8, smallboard=<20>, marginbottomwidth=0.5em}
\begin{center}
\chessboard[style=8x8, setwhite={Ka8, Pa6},addblack={Kc7},showmover=false]
\end{center}
\caption{A quiescent position: The white king is cornered and cannot get out without challenging the black king.}
\label{F:corneredking}
\end{figure}

In all these positions, it is easy to see that both White and Black have strategies that win with probability $1/2$ by consistently moving the king towards the opponent's king, except in the \emph{blocked pawn} cases where Black should first capture the white pawn. More precisely, the \emph{bare kings} and \emph{ghost bishop} positions belong to $T_7$ and $T'_7$, since a player can capture the opponent's king in at most 7 $s$-greedy moves with a favorable sequence of coin flips. Similarly the \emph{cornered king} positions belong to $T_2$ and $T'_2$, while the \emph{blocked pawn} positions belong to $T_7$ and $T'_{13}$, since being restricted to $s$-greedy moves, it might take Black up to 13 moves to first capture the white pawn and then go after the white king. 

We can therefore conclude that $\alpha(P) = \beta(P) = s_{2644}(P)$ for all three-piece endgames.

\section{Non-dyadic values} \label{S:nondyadic}

One might have expected from the discussion in Section~\ref{S:examples} that all values are dyadic rationals, but this is not the case. The position in Figure~\ref{F:nondyadic} with value $249/320$ is the simplest with a non-2-power denominator (on an 8 by 8 board).

\begin{figure} [h]
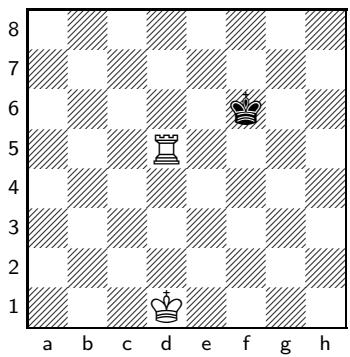

\storechessboardstyle{8x8}{maxfield=h8, smallboard, marginbottomwidth=0.5em}
\begin{center}
\chessboard[style=8x8, setwhite={Kd1, Rd5},addblack={Kf6},showmover=false]
\end{center}
\caption{A position with the non-dyadic value $249/320$.}
\label{F:nondyadic}
\end{figure}

Most rook and queen endgames have 2-power denominators. For queen endgames, the largest denominator is $2^{28}$, but there are also values with denominators divisible by 3, 5, and 17. 
For rook endings, the largest denominator is $229627505902878720 = 2^{40}\cdot 3^3\cdot 5 \cdot 7\cdot 13\cdot 17$, but there are also denominators with a factor 251. 
Bishop endings are relatively simple with only denominators of 1, 2, 4, 8, and 16 occurring. 
     
\section{Knight endgames and huge denominators} \label{S:knight}
 By far the most complex three-piece endgames are the knight endgames (as well as some pawn endgames that lead to knight promotion). The largest denominator in a three-piece endgame occurs for the position in Figure~\ref{F:complex}.
 
%
%
%
%
%
%
%
%
%

\begin{figure} [h]
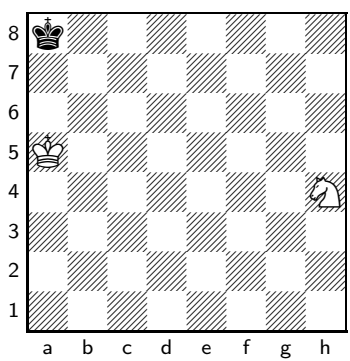

\storechessboardstyle{8x8}{maxfield=h8, smallboard, marginbottomwidth=0.5em}
\begin{center}
\chessboard[style=8x8, setwhite={Nh4,Ka5},addblack={Ka8},showmover=false]
\end{center}
\caption{The most complex three-piece ending.}
\label{F:complex}
\end{figure}
The value of this position is a number with 138-digit numerator and denominator:

\bigskip

118149099210761088839658071450928865980708175943671062283570061

370088990297242487312344048797827448187146592684262495193145202

761460197371
\begin{center} $\Big /$ \end{center}

200453006658428905551436939930457127472327950605425153085344343

480681727125595119114980629492845444447049929082740309543514434

854453248000,

\bigskip

\noindent or approximately $0.5894104617$. The denominator factorizes as \begin{multline}\notag 2^{131}\cdot 3^4 \cdot 5^3 \cdot 7^2\cdot 17\cdot 211\cdot 487 \cdot 63587\cdot 68891\cdot 1894603\\ \cdot 42481581776421430245997\\ \cdot 240980537473228976453730945188262261414394247399.\end {multline}

It is true that this exact number only governs an idealized version of bidding chess with continuous money, but since the number somehow reflects the pattern of optimal moves, the optimal strategies will likely be very intricate also for a reasonable number of chips (although the optimal moves may vary depending on the number of chips \cite{DP}).  

\section{Zugzwang} \label{S:zugzwang}
It was pointed out in \cite{LLPU} that \emph{impartial} bidding games (so-called Richman games) never require negative bids. This does not hold in general for partizan games \cite{DP}. It was speculated in \cite{Beasley} that there might exist positions in bidding chess calling for negative bids, that is, positions where one would prefer the opponent to make the next move. This is indeed the case, and an example is given in Figure~\ref{F:zugzwang}.
\begin{figure} [h]
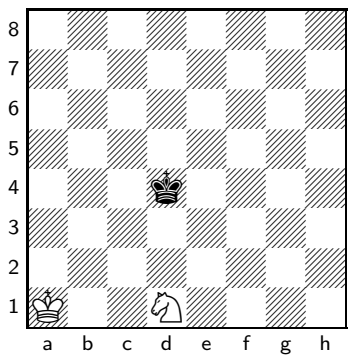

\storechessboardstyle{8x8}{maxfield=h8, smallboard, marginbottomwidth=0.5em}
\begin{center}
\chessboard[style=8x8, setwhite={Ka1, Nd1},addblack={Kd4},showmover=false]
\end{center}
\caption{A position of \emph{zugzwang}: neither player wants to move.}
\label{F:zugzwang}
\end{figure}

This position has value $21073/32256\approx 0.6533$. White's best move is to ``sacrifice'' the knight with Nd1--c3, even though this leads to a position of value only $10489/16128 \approx 0.6504$. The problem is that a move with the king will bring it closer to the black king, while moving the knight will either put it \emph{en prise} (c3 or e3) or move it further from the black king in the knight's metric (b2 and f2 are four knight moves away from d4). Black's best move is Kd4--c4, bringing the value up to $21/32 = 0.65625$.

\section{Pawn promotion}
Since there is no stalemate in bidding chess, we only need to consider promotion to queen or knight. A rook or bishop can never be better than a queen. In some positions there is only a tiny difference in value between promoting to knight and promoting to a queen. For instance, in the position in Figure~\ref{F:promotion} (left), White to move should play d8N!, obtaining a position of value $205/256\approx 0.80078$, while a promotion to queen gives a value of only $3279/4096\approx 0.80054$. However, if we move the entire position one step to the right as in Figure~\ref{F:promotion} (right), the knight-promotion still leads to a position of value $205/256$, while e8Q! gives the slightly higher value of $3285/4096\approx0.80200$! 

\begin{figure}
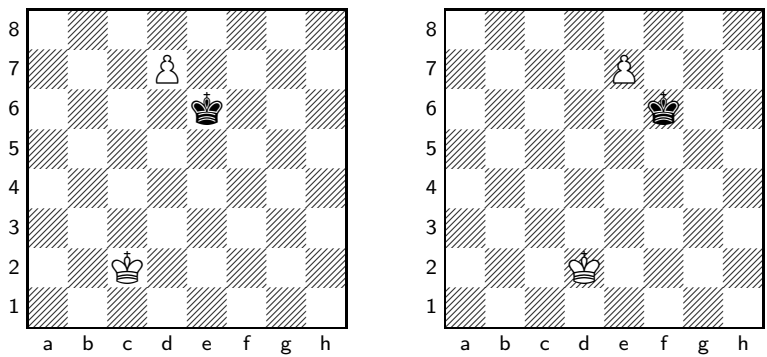

\begin{center}
\storechessboardstyle{8x8}{maxfield=h8, smallboard}
\chessboard[style=8x8, setwhite={Kc2, Pd7},addblack={Ke6},showmover=false]
\storechessboardstyle{8x8}{maxfield=h8, smallboard}
\chessboard[style=8x8, setwhite={Kd2, Pe7},addblack={Kf6},showmover=false]
\end{center}
\caption{Left: White's best move is to promote to a knight. Right: White should promote to a queen.}
\label{F:promotion}
\end{figure}

\section{Positions where $\alpha<\beta$} \label{S:sharp}
We have shown that for all three-piece endgames, $\alpha(P)=\beta(P)$, but we have no ``theoretical'' explanation for why this must be so. There are conditions under which bidding games must be \emph{sharp} in this sense \cite{LLPU, LLPSU}, but such conditions do not seem to be met in chess. And if we allow  more pieces on the board, it is easy to construct positions that have so-called \emph{nontrivial Richman intervals}, that is, where $\alpha(P) < \beta(P)$. 

\begin{figure} [h]
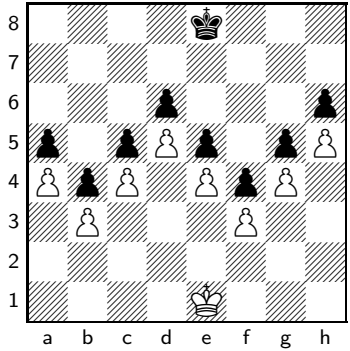

\storechessboardstyle{8x8}{maxfield=h8, smallboard}
\begin{center}
\chessboard[style=8x8, setwhite={Pa4,Pb3, Pc4, Pd5, Pe4, Pf3, Pg4, Ph5,Ke1},addblack={Ke8, Pa5, Pb4, Pc5, Pd6, Pe5, Pf4, Pg5, Ph6},showmover=false]
\end{center}
\caption{A position where $\alpha < \beta$.}
\label{F:draw}
\end{figure}

An example is given in Figure~\ref{F:draw}, where we claim that $\alpha\leq 1/4$ and $\beta\geq 3/4$. In other words, a player with more than $1/4$ of the money need not lose. For instance, if White tries to break through the wall of pawns by playing the king to d4 and capturing at e5, Black will go \emph{all in} when the white king has reached d4. Black will then have more money than White after the capture on e5, and will be able to recapture with the d6-pawn. 

\section{Other board sizes}
Mathematically there is of course nothing special about the $8\times 8$ board size, and we have investigated other board sizes as well. The results are similar to those of the 8 by 8 board. 
In particular there are no three-piece endgames with $\alpha<\beta$ for any board size smaller than 8 by 8. 

On the 3 by 4 board, there are quiescent positions of value different than $1/2$. In the position of Figure~\ref{F:quiescent}, White cannot improve their position by any move. 

\begin{figure} [h]
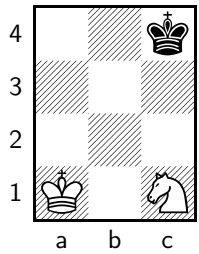

\storechessboardstyle{3x4}{maxfield=c4}
\begin{center}
\chessboard[style=3x4, setwhite={Ka1,Nc1},addblack={Kc4},showmover=false]
\end{center}
\caption{A quiescent position of value $5/8$.}
\label{F:quiescent}
\end{figure}

One peculiarity that occurs on a 3 by 8 board is a value with odd denominator. The following position has the value $653/819$, the denominator factorizing as $3^2\cdot 7\cdot 13$.
\storechessboardstyle{8x3}{maxfield=h3, smallboard}
\begin{center}
\chessboard[style=8x3, setwhite={Ka1, Ng1},addblack={Ke2},showmover=false]
\end{center}


A curiosity that occurs on a 4 by 4 board is the following position where White will win the random turn game with probability $31/48$, but where the game (provided it is played optimally) will end in a draw if White wins all the coin flips! 

\storechessboardstyle{4x4}{maxfield=d4}
\begin{center}
\chessboard[style=4x4, setwhite={Ka1, Nd1},addblack={Kd4},showmover=false]
\end{center}

Just like the similar position on the 8 by 8 board, this is a \emph{zugzwang}, where White would prefer Black to make the next move. As long as the black king stays in the corner, White's problem is that they can't bring their knight to a square where it threatens the black king without first putting it \emph{en prise}. If White has to move, there are three optimal moves, Ka2, Kb1, and Nb2, all three decreasing the value from White's perspective to $61/96$. If White then gets to move again, their best option is to move back to the diagram position (or to the equivalent position with the knight on a4). So as long as White ``wins'' all the coin flips, they will move back and forth, waiting for Black to have to move their king. 

Whenever a position has a non-dyadic value, there must be some infinite sequence of coin flips that causes the random turn game to go on forever under optimal play. What is a bit unusual here is that that sequence is one where the same player wins them all.

\end{document}